\newtheorem{Theorem}{\sc Theorem}[section]
\newtheorem{lemma}[Theorem]{\sc Lemma}
\newtheorem{Proposition}[Theorem]{\sc Proposition}
\newtheorem{Corollary}[Theorem]{\sc Corollary}
 \newtheorem{Example}[Theorem]{\sc Example}
\newtheorem{Remark}[Theorem]{\sc Remark}
\def\hpic #1 #2 {\mbox{$\begin{array}[c]{l} 
\epsfig{file=#1,height=#2}\end{array}$}}
\def\wpic #1 #2 {\mbox{$\begin{array}[c]{l} 
\epsfig{file=#1,width=#2}\end{array}$}}
\def\C{\mathbb C}
\def\F{\mathbb F}
\def\N{\mathbb N}
\def\R{\mathbb R}
\def\Z{\mathbb Z}
\def\CF{{\cal {F}}}
\def\CH{{\cal {H}}}
\def\CL{{\cal {L}}}
\def\CM{{\cal {M}}}
\def\CP{{\cal {P}}}
\def\be{\begin{equation}}
\def\ee{\end{equation}}
\def\bt{\begin{Theorem}}
\def\et{\end{Theorem}}
\def\ba{\begin{array}}
\def\ea{\end{array}}
\def\bi{\begin{itemize}}
\def\ei{\end{itemize}}
\def\bea{\begin{eqnarray}}
\def\eea{\end{eqnarray}}
\def\beast{\begin{eqnarray*}}
\def\eeast{\end{eqnarray*}}
\def\ben{\begin{enumerate}}
\def\een{\end{enumerate}}
\def\half{\frac{1}{2}}
\def\bi{\bibitem}
\def\rar{\rightarrow}
\def\Rar{\Rightarrow}
\title{From graphs to free products}
\author{Madhushree Basu, Vijay Kodiyalam and V.S.Sunder\\The Institute of Mathematical
  Sciences, Chennai, India\\{\small e-mail:
    madhushree@imsc.res.in, vijay@imsc.res.in, sunder@imsc.res.in}}
\begin{document}

\maketitle


\begin{abstract}
We investigate a construction which associates a finite von Neumann
algebra $M(\Gamma,\mu)$ to a finite weighted graph
$(\Gamma,\mu)$. Pleasantly, but not surprisingly, the von Neumann
algebra associated to 
to a `flower with $n$ petals' is the group von Neumann algebra of the
free group on $n$ generators. In general, the algebra
$M(\Gamma,\mu)$ is a free product, with amalgamation over
a finite-dimensional abelian 
subalgebra corresponding to the vertex set, of algebras associated
to subgraphs `with one edge' (or actually a pair of dual edges). This
also yields `natural' examples of (i) a Fock-type model of an operator
with a free Poisson distribution; and (ii) $\C \oplus \C$-valued
circular and semi-circular operators.
\end{abstract}

\section{Preliminaries}

There has been a serendipitous convergence of  investigations being
carried out independently by us on the one hand, and by Guionnet,
Jones and Shlyakhtenko on the other - see [GJS1], [KS1], [KS2],
[GJS2]. As it has 
turned out, we have been providing independent proofs, from slightly
different viewpoints, of the same facts. Both the papers  [KS2] and
[GJS2], establish that a certain von Neumann algebra associated to a
graph is a free product with amalgamation of a family of von Neumann
algebras corresponding to simpler graphs. The amalgamated product
involved subgraphs indexed by vertices in [KS2], while the subgraphs
are indexed by edges in [GJS2]. 
This paper was motivated by trying to understand how the proof of our
result in [KS2] was also drastically simplfied by considering edges rather than
vertices. And, this third episode in our series seems to
have the following points in its favour: 
\begin{itemize}
\item It does make certain cumulant computations and consequent free
  independence assertions much more transparent. 
\item It brings to light a quite simple `Fock-type model' of free
  Poisson variables. 
\item By allowing non-bipartite graphs, we get the aesthetically
  pleasing fact mentioned in the abstract regarding the `flower on $n$
  petals'.
\end{itemize}

\bigskip 
We investigate, in a little more detail, the construction in [KS2]
which associated a von Neumann probability space to a weighted graph.
We begin by recalling the set-up:

\bigskip
By a weighted graph we mean a tuple $\Gamma = (V,E,\mu)$, where:
\begin{itemize}
\item $V$ is a (finite) set of vertices;
\item $E$ is a (finite) set of edges, equipped with `source' and `range' maps
$s,r:E \rar V$ and `(orientation) reversal' invoution map $E \ni e \mapsto
\tilde{e} \in E$ with $(s(e),r(e)) = (r(\tilde{e}),s(\tilde{e}))$; and
\item $\mu:V \rar (0,\infty)$ is a `weight or spin function' so
  normalised that $\sum_{u \in V} \mu^2(v) = 1$
\ei

We let $\CP_n = \CP_n(\Gamma)$ denote the set of paths of length $n$
in $\Gamma$ and  let $P_n(\Gamma)$ denote the vector space with basis $\{
[\xi] : \xi \in \CP_n(\Gamma)\}$. We think of 
$\xi = \xi_1 \xi_2 \cdots \xi_n$ as the `concatenation product' where
$\xi_i$ denotes the $i$-th edge of $\xi$. We write $F(\Gamma) =
\oplus_{n\geq 0}P_n(\Gamma)$ for the indicated direct sum, and equip
it with the following slightly complicated multiplication: if $\xi \in
\CP_m(\Gamma), \eta \in \CP_n(\Gamma)$, then $[\xi] \# [\eta] =
\sum_{k=0}^{\min(m,n)} [\zeta_k]$, where $\zeta_k \in \CP_{m+n-2k}$ is
defined by
\[ \zeta_k = \left\{ \ba{ll}
    \frac{\mu(v^\xi_m)}{\mu(v^\xi_{m-k})} [\xi_1\xi_2 \cdots \xi_{m-k}
    \eta_{k+1} \eta_{k+2} \cdots \eta_n] & \mbox{if }\xi_{m-j+1} =
    \widetilde{\eta_j} \forall 1 \leq j \leq k\\0 &
    \mbox{otherwise}\ea \right. \]

Here, and elsewhere, we adopt the convention that if $\xi \in
\CP_n$, then $\xi = \xi_1 \xi_2 \cdots \xi_n$ denotes concatenation
product, with $\xi_i \in E$ and 
we  write $s(\xi_i) = v^\xi_{i-1}$ (so also $r(\xi_i) = s(\xi_{i+1}) =
v^\xi_i$).

In particular, notice that
$\CP_0(\Gamma) = \{v : v \in V\}$, and that if $v=s(\xi),w=r(\xi)$
for some $\xi \in \CP_n$, and if $u_1,u_2 \in V$, then $[u_1] [\xi]
[u_2] = \delta_{u_1,v} \delta_{u_2,w} [\xi]$; and less trivially, if
$\xi \in \CP_1$ and $\eta \in \CP_m, m \geq 1$, then
\[ [\xi] \# [\eta] = \left\{ \ba{ll} ~0 & \mbox{if } r(\xi) \neq
  s(\eta)\\~[\xi \eta_1 ... \eta_m] & \mbox{if } r(\xi) = s(\eta)
  \mbox{ but } 
\xi \neq \widetilde{\eta_1}\\ 
~[\xi \eta_1 ... \eta_m] + \frac{\mu(r(\xi))}{\mu(s(\xi))}
[\eta_2\cdots\eta_m] & \mbox{if } \xi = \widetilde{\eta_1} 
\ea \right. \]

We define $\phi: F(\Gamma) \rar P_0$ by requiring that if $\xi \in
P_{n}$, then
\[\phi([\xi]) = \left\{ \ba{ll} ~0 & \mbox{if }  n > 0\\
~[\xi] & \mbox{if } n = 0 \ea
\right. \]
and finally define
\[\tau = \mu^2 \circ \phi\]
where we simply write $\mu^2$ for the linear extension to $P_0(\Gamma)$
which agrees with $\mu^2$ on the basis $\CP_0(\Gamma)$.

It was shown in [KS]\footnote{Actually, [KS] treated only the case of
  bipartite graphs, and sometimes restricted attention to the case of
  the Perron-Frobenius weighting; but for the the proof of statements made in this
  paragraph, none of those restrictions is necessary.}  that
$(F(\Gamma), \tau)$ is a tracial non-commutative 
*-probability space, with $e^* = \tilde{e}$, that the mapping $y
\mapsto xy$ extends to a $*$-algebra representation $F(\Gamma) \rar
\CL(L^2(F(\Gamma)),\tau)$ and that $M(\Gamma,\mu) = \lambda(F
\Gamma))^{\prime\prime} \subset \CL(L^2(F(\Gamma)),\tau)$ is in standard form.
Before proceeding further, it is worth noting that for $\xi, \eta
\in \cup_n \CP_n(\Gamma)$, we have
\[ \tau([\xi]\#[\eta]^*) = \delta_{\xi,\eta} \mu(r(\xi))\mu(s(\xi)) ~,\]
and hence, if we write $\{\xi\} = (\mu(s(\xi))\mu(r(\xi)))^{-\half}[\xi]$,
then $\{\{\xi\}: \xi \in \cup_{n \geq 0} \CP_n(\Gamma)\}$  is an orthonormal basis for
$\CH(\Gamma) = L^2(F(\Gamma), \tau)$.

\section{The building blocks}
Our interest here is the examination of just how $M(\Gamma,\mu)$ depends
on $(\Gamma, \mu)$. We begin by spelling out some simple examples,
which will turn out to be building blocks for the general case.

\begin{Example}\label{mgamegs} 
\ben
\item Suppose $|V|=|E|=1$, say $V=\{v\}$ and $E=\{e\}$. Then
  we must have $e=\tilde{e}, s(e)=r(e)=v,\mu(v)=1,\CP_n  = \{e^n\}$
  and $\{\xi(n)= \{e^n\}:n\geq 0\}$ (where $\{e^0\} = \{v\}$) is an 
  orthonormal basis for $\CH(\Gamma)$; and the definitions show that
   $x=\lambda(e)$ satisfies $x\xi_n=\xi(n+1) + \xi(n-1)$. Thus $x$ is
  a semi-circular element and $M(\Gamma) = \{x\}^{\prime\prime} \cong
  L\Z$.
\item Suppose $|V|=1, |E|=2$, say $V=\{v\}$ and $E=\{e_1,e_2\}$
  suppose $e_2=\widetilde{e_1}$. Then we must have
  $s(e_j)=r(e_j)=v,\mu(v)=1$. Further $\{\{e_1\},\{e_2\}\}$ is an
  orthonormal basis for $\CH_2 = P_1(\Gamma)$, and $P_n(\Gamma)$ is
  isomorphic to $\otimes^n \CH_2$. Thus $\CH(\Gamma)$ may be
  identified with the full Fock space $\CF(\CH_2)$ and the definitions show that
  $x_1=\lambda(e_1)$ may be identifed as $x_1 = l_1 + l_2^*$, where
  the $l_j$ denote the standard creation operators. It follows that
  $x_1$ is a circular element and $M(\Gamma) = \{x_1\}^{\prime\prime} \cong
  LF_2$.
\item Suppose $|V|=2, |E|=2$, say $V=\{v,w\}$ and
  $E=\{e,\widetilde{e}\}$ and suppose $s(e)=v, r(e)=w$ and
  $\mu(w) \leq \mu(v)$. Write $\rho = \frac{\mu(v)}{\mu(w)} (\geq
  1)$. If we let $p_v = \lambda([v]), p_w = \lambda([w]),$ it follows
  that $\CH_v = ran ~p_v$ (resp., $\CH_w = ran ~p_w$) has an
  orthonormal basis given by  
  $\{\{\eta(n)\}: n \geq 0\}$ (resp., $\{\{\xi(n)\}: n \geq 0\}$  where
  $\eta(n) \in \CP_n$ (resp., $\xi(n) \in \CP_n$) and $\eta(n)_k =
  e$ or $\tilde{e}$ (resp., $\xi(n)_k = \tilde{e}$ or ${e}$
  according as $k$ is odd or even). 

Writing $x = \lambda(e)$, we see that with respect to the
  decomposition $\CH(\Gamma) = \CH_v \oplus \CH_w$, the operator $x$
  has a matrix decomposition of the form
\[ x = \left[ \ba{ll} 0 & t\\0 & 0 \ea \right] \]
where $t \in \CL(\CH_w,\CH_v)$ is seen to be given by
\beast
t[\xi(n)] &=& x [\xi(n)]\\
&=& [e] \# [\tilde{e} e \tilde{e} e \cdots (n ~terms)] \\
&=& [\eta(n+1)] + \rho^{-1}[\eta(n-1)] ~;
\eeast
and hence,
\beast
t\{\xi(n)\} &=& (\mu(s(\xi(n))\mu(r(\xi(n)))^{-\half} t[\xi(n)]\\
&=& (\mu(w)\mu(r(\xi(n)))^{-\half} \left( [\eta(n+1)] + \rho^{-1}[\eta(n-1)]
\right)\\
&=& (\rho^{-1} \mu(v) \mu(r(\eta(n\pm 1)))^{-\half} \left( [\eta(n+1)] + \rho^{-1}[\eta(n-1)]
\right)\\
&=&  \rho^\half\{\eta(n+1)\} + \rho^{-\half}\{\eta(n-1)\}
\eeast

It is a fact - see Proposition \ref{abscont} - that $t^*t$ has
has absolutely continuous spectrum. This fact has two consequences:

(i) if $t = u|t|$ is the polar decomposition of $t$, then $u$ maps
$\CH_w$ isometrically onto the subspace $\CM = \overline{ran ~t}$ of
$\CH_v$, and if $z$ is the projection onto $\CH_v \ominus \CM$ then
$\tau(z) = \mu^2(v)-\mu^2(w)$; and

(ii) $W^*(|t|) \cong L\Z$.

Since $p_v + p_w = 1$ and $z \leq p_v$, the definitions are seen to
show that $M(\Gamma,\mu)$ is isomorphic to $\C \oplus M_2(L\Z)$ via
the unique isomorphism which maps $p_v,p_w,z,u$ and $|t|$, respectively, to
$(1,\left( \ba{cc} 1 & 
  0\\0 & 0\ea \right)), (0,\left( \ba{cc} 0 & 0\\0 & 1\ea \right)), 
  (1,\left( \ba{cc} 0 & 0\\0 & 0\ea \right)), (0,\left( \ba{cc} 0 &
  1\\0 & 0\ea \right)), $ and $(1,\left( \ba{cc} 0 & 0\\0 & a\ea
\right))$ for some positive $a$ with absolutely continuous spectrum which
generates $L\Z$ as a von Neumann algebra. (This must be compared with
Lemma 17 of [GJS2], bearing in mind that their $\mu$ is our $\mu^2$.)
\een
\end{Example}

\begin{Proposition}\label{abscont}
Let $\ell^2(\N)$ have its standard orthonormal basis $\{\delta_n : n
\in \N\}$. (For us, $\N = \{0,1,2, \cdots \}$.) Let $\ell \delta_n =
\delta_{n+1}$ denote the creation operator (or unilateral shift), with
$\ell^*\delta_n = \delta_{n-1}$ (where $\delta_{-1} = 0$). Let $\rho >
1$ and $t = \rho^\half \ell + \rho^{-\half}\ell^*$. Then, 
\ben
\item $t^*t$   leaves the subspace $\ell^2(2\N)$ invariant; 
\item $\delta_0$ is a cyclic vector for the restriction to $\ell^2(2\N)$  of $t^*t$, call it $a_\rho$;  and 
\item the (scalar) spectral measure of $a_\rho$ associated to
  $\delta_0$ is absolutely continuous with respect to Lebesgue
  measure; in fact $a_\rho$ has a free Poisson distribution.
\een
\end{Proposition}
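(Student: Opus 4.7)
The plan is to realize $a_\rho$ as a Jacobi matrix on $\ell^2(\N)$ (via the obvious identification $\ell^2(2\N) \cong \ell^2(\N)$), compute the $(0,0)$-entry $m(z)$ of its resolvent in closed form, and then read off the spectral density by Stieltjes inversion.

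First I would expand $t^*t = \rho\,\ell^*\ell + (\ell^*)^2 + \ell^2 + \rho^{-1}\ell\ell^*$ and use $\ell^*\ell = I$, $\ell\ell^* = I - q$ (where $q$ is the rank-one projection onto $\C\delta_0$) to rewrite this as $t^*t = (\rho + \rho^{-1})I + \ell^2 + (\ell^*)^2 - \rho^{-1}q$. Each summand changes the index by an even amount, so $t^*t$ preserves both $\ell^2(2\N)$ and $\ell^2(2\N+1)$, yielding (1). Setting $e_n = \delta_{2n}$, the matrix of $a_\rho$ in the orthonormal basis $\{e_n\}_{n \geq 0}$ is tridiagonal with off-diagonal entries all equal to $1$ and diagonal $(\rho, \rho+\rho^{-1}, \rho+\rho^{-1}, \ldots)$; a direct induction then gives $a_\rho^n e_0 \in e_n + \mathrm{span}\{e_0, \ldots, e_{n-1}\}$, so every $e_n$ lies in the cyclic span of $e_0$, proving (2).

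For (3), the `tail' of the matrix (indices $\geq 1$) is the constant Jacobi matrix with diagonal $\rho+\rho^{-1}$ and off-diagonals $1$; by translation invariance its $(0,0)$-resolvent entry $g(z)$ satisfies the quadratic $g^2 + (z - \rho - \rho^{-1})g + 1 = 0$, and the root decaying at infinity is $g(z) = \half\bigl(-(z - \rho - \rho^{-1}) + \sqrt{D(z)}\bigr)$, where $D(z) = (z-a)(z-b)$ with $a = (\rho^\half - \rho^{-\half})^2$ and $b = (\rho^\half + \rho^{-\half})^2$. A Schur complement at the boundary row gives $m(z) = [(\rho - z) - g(z)]^{-1}$, which after rationalization simplifies to $m(z) = -\rho(z - \rho + \rho^{-1} - \sqrt{D(z)})/(2z)$. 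Stieltjes inversion then produces the a.c.\ density $\frac{1}{\pi}\mathrm{Im}\,m(x+i0) = \frac{\rho}{2\pi x}\sqrt{(b-x)(x-a)}$ on $[a,b]$, which is precisely the free Poisson density (rate $\rho^2$, jump $\rho^{-1}$) in the convention $\frac{1}{2\pi\alpha x}\sqrt{(b-x)(x-a)}$ on $[\alpha(1-\sqrt\lambda)^2, \alpha(1+\sqrt\lambda)^2]$.

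The main obstacle is the apparent pole of $m(z)$ at $z = 0$: the denominator vanishes there, suggesting an atom at $0$. Tracking the branch of $\sqrt{D}$ from $+\infty$ through the upper half-plane down to $0$ (avoiding the cut $[a,b]$), one finds $\sqrt{D(0)} = \rho^{-1} - \rho$, which is the \emph{negative} square root of $(\rho - \rho^{-1})^2$; this makes the numerator $z - \rho + \rho^{-1} - \sqrt{D(z)}$ vanish at $z = 0$ and cancels the apparent pole, confirming that the spectral measure is purely absolutely continuous. Once this branch choice is in hand, positivity of the density on $[a,b]$ and the identification with a free Poisson law follow by matching parameters to the standard formula.
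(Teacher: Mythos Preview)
Your proof is correct, and the overall architecture matches the paper's: compute $t^*t=(\rho+\rho^{-1})I+\ell^2+\ell^{*2}-\rho^{-1}q$, restrict to $\ell^2(2\N)$, obtain a closed-form Cauchy transform, and apply Stieltjes inversion to identify the free Poisson density.

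The one genuine methodological difference lies in how the Cauchy transform is computed. The paper first shifts by $(\rho+\rho^{-1})$ to view the restriction as a rank-one perturbation $a_\rho=\ell+\ell^*-\rho^{-1}p_0$ of the semicircular operator $a_0=\ell+\ell^*$, and then uses the resolvent equation for rank-one perturbations to obtain $F_\rho=F_0/(1-\rho^{-1}F_0)$ in terms of the known semicircular transform $F_0$. You instead keep the unshifted operator, recognize it as a Jacobi matrix with modified first diagonal entry, and use the Schur complement (continued-fraction) relation $m(z)=[(\rho-z)-g(z)]^{-1}$, where $g$ solves the self-similar quadratic coming from the constant tail. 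These two routes are closely related but not identical: the paper's perturbation formula compares $a_\rho$ to the full operator $a_0$, whereas your Schur step strips off one row/column and compares to the tail. Your approach has the small advantage of delivering the density directly in the free-Poisson coordinates $a=(\sqrt\rho-1/\sqrt\rho)^2$, $b=(\sqrt\rho+1/\sqrt\rho)^2$ without a final change of variable; the paper's approach has the advantage of making the connection with the semicircular law explicit.

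Your treatment of the apparent pole at $z=0$ is the delicate point, and you handle it correctly: with the branch of $\sqrt{D}$ determined by $\sqrt{D(z)}\sim z$ at infinity (needed so that $g(z)\sim -1/z$), one has $\sqrt{D(z)}<0$ for real $z<a$, hence $\sqrt{D(0)}=\rho^{-1}-\rho$ and the numerator vanishes. This is exactly the sign subtlety the paper flags in its acknowledgement. It would not hurt to add one sentence noting that $m$ therefore extends continuously and is real on $\R\setminus[a,b]$, so Stieltjes inversion yields no singular part anywhere, not just no atom at $0$.
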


\begin{proof}
A little algebra shows that
\beast
t^*t &=& (\rho^\half \ell^* + \rho^{-\half}\ell )(\rho^\half \ell + \rho^{-\half}\ell^*)\\
&=& \ell^2 + \ell^{*2} + (\rho + \rho^{-1}) - \rho^{-1} p_0 ~,
\eeast
where $p_0$ is the rank one projection onto $\C \delta_0$. It is seen that this operator leaves both subspaces $\ell^2(2\N)$ and $\ell^2(2\N+1)$ invariant, with its restrictions to these subspaces being unitarily equivalent to $\ell + \ell^* +  (\rho + \rho^{-1})  - \rho^{-1} p_0$ and $ \ell + \ell^* $ respectively. Since the spectral type does not change under scalar translation, we may assume without loss of generality that $a_\rho = \ell + \ell^* - \rho^{-1} p_0$ and establish that $a_0$ has absolutely continuous scalar spectral measure corresponding to $\delta_0$.

Write $a_0 = \ell + \ell^*$ so that $a_\rho = a_0 - \rho^{-1} p_0$. Let the scalar spectral measures of $a_0$ and $a_\rho$ be denoted by $\mu$ and $\mu_\rho$ respectively, and consider their Cauchy transforms given by
\[F_\lambda(z) = \langle (a_\lambda - z)^{-1}\delta_0, \delta_0 \rangle = \int_\R \frac{d\mu_\lambda (x)}{x - z}\]
for $\lambda \in \{0, \rho\}$ and
$z \in \C^+ = \{\zeta \in \C : Im (\zeta) > 0 \}$.

It follows from the resolvent equation that
\beast
F_\rho(z) &=&  \langle (a_\rho - z)^{-1}\delta_0, \delta_0 \rangle\\
&=&  \langle (a_0 - z)^{-1}\delta_0, \delta_0 \rangle +  \langle (a_\rho - z)^{-1}\rho^{-1}p_0(a_\lambda - z)^{-1}\delta_0, \delta_0 \rangle\\
&=& F_0(z) + \rho^{-1}F_\rho(z)F_0(z)~;
\eeast
Hence 
\be \label{frho} F_{\rho}(z) = \frac {F_0(z)} {1-\rho^{-1}F_0(z)} = \frac {\rho F_0(z)} {\rho - F_0(z)} \ee

It is seen from Lemma 2.21 of [NS] - after noting that the $G$ of that Lemma is the negative of the $F_0$ here - that $F_0(z) = \frac{z - \sqrt{z^2 - 4}}{2}$ where  $\sqrt{z^2 - 4}$ is a branch of that square root such that
$\sqrt{z^2 - 4} = \sqrt{z+2}\sqrt{z-2}$ where the two individual factors are respectively defined by using the branch-cuts $\{\mp 2 - it  : t \in (0, \infty)$. (This choice ensures that $lim_{|z| \rar \infty} F_0(z) = 0$, which is clearly necessary.) It follows that $F_0$, which is holomorphic in $\C^+$, actually extends to a continuous function on $\C^+ \cup \R$, and that if we write $f_0(a) = \lim_{b \downarrow 0} F_0(a + ib)$, then we have
\be \label{f0} 2 f_0(t) = \left\{ \ba{ll} - t + \sqrt{t^2 - 4} & \mbox{if } t \geq 2\\
- t + i \sqrt{4 - t^2} & \mbox{if } t \in [-2, 2]\\
- t - \sqrt{t^2 - 4} & \mbox{if } t \leq - 2 \ea \right. \ee
It is easy to check that $f_0$ is strictly increasing in $(-\infty,-2)$, as well as in in $(2, \infty)$, has non-zero imaginary part in $(-2,2)$, and satisfies $f(\R \setminus (-2,2)) = [-1,0) \cup (0,1]$. Since $\rho > 1$, we may deduce that $F_0(z) \neq \rho ~\forall z \in \C^+ \cup \R$, and hence that  also $F_\rho$ extends to a continuous function on $\C^+ \cup \R$ with equation (\ref{frho}) continuing to hold for all $z \in \C^+ \cup \R$. Writing $f_\lambda(t) = F_\lambda(t + i0)$ for $\lambda \in \{0, \rho\}$, we find that
\[f_\rho(t) = \frac{\rho f_0(t)}{\rho - f_0(t)} = \frac{1}{f_0(t)^{-1} - \rho^{-1} } ~,\]
and hence that
\beast
Im(f_\rho(t)) &=& - \frac{Im(f_0(t)^{-1})}{|f_0(t)^{-1} - \rho^{-1}|^2}\\
&=&  \frac{Im(f_0(t))}{|1 - f_0(t) \rho^{-1}|^2}\\
&=& \rho^2 \frac{Im(f_0(t))}{|f_0(t)  - \rho|^2}\\
&=& 1_{[-2,2]}(t) \frac{ \rho^2 \sqrt{4-t^2}}{2|f_0(t)  - \rho|^2} ~.
\eeast
Now, for $t \in [-2,2]$, we see that
\beast
|f_0(t)  - \rho|^2 &=& |\frac{-t + i \sqrt{4-t^2}}{2} - \rho|^2\\
&=& \frac{1}{4} \left( (t + 2 \rho)^2 + 4 - t^2 \right)\\
&=& \rho^2 + \rho t + 1~.
\eeast

It follows from Stieltje's inversion formula that our $a_\rho$ has absolutely continuous scalar spectral measure $\mu_\rho$, with density given by
\beast 
g_\rho(t) &=& \frac{1}{\pi} Im f_\rho(t)\\
&=& 1_{[-2,2]}(t) \frac{ \rho^2 \sqrt{4-t^2}}{2\pi (\rho^2 + \rho t + 1)}~.
\eeast

Hence the operator $t^*t = a_\rho + (\rho + \rho^{-1}) 1$ has has absolutely continuous scalar spectral measure, with density given by
\beast
g(t) &=& g_\rho(t - (\rho + \rho^{-1}))\\
&=&  1_{[(\rho + \rho^{-1})-2,(\rho + \rho^{-1})+2]}(t) \frac{\rho^2 \sqrt{4 - (t - (\rho + \rho^{-1})}^2}{2\pi \rho^{-2}(\rho^2 + \rho(t - \rho -\rho^{-1})+1)}\\
&=&  1_{[(\rho + \rho^{-1})-2,(\rho + \rho^{-1})+2]}(t) \frac{\rho^2 \sqrt{4 - (t - (\rho + \rho^{-1})}^2}{2\pi \rho^{-1}t }
\eeast
If we write $\lambda = \rho^2$ and $\alpha = \rho^{-1}$, we see that 
$\alpha(1 + \lambda)$ and recognise the fact that not only does $t^*t$ 
have absolutely continuous spectrum, but - by comparing with 
equation (12.15) of [NS] -even that it 
actually has a free Poisson distribution, with rate $\rho^2$ and 
jump size $\rho^{-1}$. However, we actually discovered this fact about
$t^*t$ having a free Poisson distribution with the stated  
$\lambda$ and $\alpha$ by a cute cumulant computation which we present
 in the final
 section, both for giving a combinatorial rather than analytic proof of this 
Proposition, and because we came across that proof first.
\end{proof}

\section{Some free cumulants}
Before proceeding with the further study of a general $(\Gamma,\mu)$,
we will need an alternative description of $M(\Gamma,\tau)$. 

Let $Gr(\Gamma) = \oplus_{n \geq 0}P_n(\Gamma)$ be
equipped with a $*$-algebra structure wherein
$[\xi] \circ [\eta] = [\xi\eta]$) and
$[\xi]^* = [\tilde{\xi}] = [\widetilde{\xi_n}\cdots\widetilde{\xi_1]}$
for $\xi \in \CP_n, \eta \in \CP_m$. It turns out - see
[KS]\footnote{The remark made in an earlier footnote, concerning
  assumptions regarding bipartiteness of $\Gamma$, applies here as well.} - that
$Gr(\Gamma)$ and $F(\Gamma)$ are isomorphic as $*$-algebras. While the
multiplication is simpler in $Gr(\Gamma)$, the trace $\tau$ on $F
(\Gamma)$ turns out, when transported by the above isomorphism, to be given
by a slightly more complicated formula. (It is what has been called
{\em the Voiculescu trace} by Jones et al.) We shall write $tr$ for
this transported trace on $Gr(\Gamma)$, and $\phi$ for the
$tr$-preserving conditional expectation of $M(\Gamma,\mu) (=
\lambda(Gr(\Gamma))^{\prime \prime})$ onto $P_0(\Gamma)$. We shall use
the same letter $\phi$ to denote restrictions to subalgebras which
contain $P_0(\Gamma)$.

We wish to regard $(Gr(\Gamma), \phi)$ as an operator-valued
non-commutative probability space over $P_0(\Gamma)$, our first order
of business being the determination of the $P_0(\Gamma)$-valued mixed
cumulants in $Gr(\Gamma)$.

\begin{Proposition}\label{Bvalcum}
The $P_0(\Gamma)$-valued mixed cumulants in $Gr(\Gamma)$ are given
thus:

$\kappa_n([e_1],[e_2], \cdots ,[e_n]) = 0$ unless $n=2$ and $e_2 =
\widetilde{e_1}$; and if $e_2 = \widetilde{e_1}$ with $s(e_1) = v,
r(e_1) = w$, then $\kappa_2([e_1],[\widetilde{e_1}])=\frac{\mu(w)}{\mu(v)}
[v]$.
\end{Proposition}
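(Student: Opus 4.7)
The plan is to transfer the cumulant computation to the $F(\Gamma)$ model, where both the multiplication $\#$ and the expectation $\phi$ are transparent, and then to exploit a Fock-type decomposition of the left regular representation. The $*$-algebra isomorphism $\alpha:Gr(\Gamma)\to F(\Gamma)$ from [KS] is the identity on $P_0(\Gamma)$ and on the length-one paths (both sides multiply length-one paths in the same way, with higher corrections only showing up in $P_{\geq 2}$). Being $P_0$-bimodular and trace-preserving, $\alpha$ intertwines, by uniqueness of the trace-preserving conditional expectation, the $Gr$- and $F$-expectations onto $P_0$. Since $\phi_F$ on $F(\Gamma)=\bigoplus_n P_n(\Gamma)$ is nothing but projection onto the degree-zero summand, the $P_0$-valued mixed cumulants of the $[e]$'s in $(Gr(\Gamma),\phi)$ coincide with those of the (same-named) elements in $(F(\Gamma),\phi_F)$ computed with $\#$.

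Reading the second displayed form of the $\#$-rule as an operator identity on $\CH(\Gamma)$ gives the decomposition
\[
\lambda([e]) \;=\; c(e) + c(\widetilde e)^*,
\]
where $c(e)[\xi]=[e\xi]$ is the left creation (zero when $r(e)\neq s(\xi)$) and $c(\widetilde e)^*[\xi]=\tfrac{\mu(r(e))}{\mu(s(e))}[\xi_2\cdots\xi_m]\mathbf 1_{\xi_1=\widetilde e}$ is the annihilation of a leading $\widetilde e$; their mutual adjointness is a one-line check against $\tau([\xi]\#[\eta]^*)=\delta_{\xi,\eta}\mu(s(\xi))\mu(r(\xi))$. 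This realises $\CH(\Gamma)=\bigoplus_n\CH_n$ as the $P_0$-valued full Fock space on the one-particle bimodule $\CH_1=P_1(\Gamma)$, with $c(e)$ and $c(\widetilde e)^*$ playing the roles of left creators and their adjoints.

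At this point the Proposition becomes an instance of Speicher's theorem on $B$-valued Fock-space generators of the form (creator)$+$(creator)$^*$: all mixed $P_0$-valued cumulants of order $\neq 2$ vanish. Since $\phi_F([e])=0$ for every edge, the surviving piece collapses to the moment, $\kappa_2([e_1],[e_2])=\phi_F([e_1]\#[e_2])$. A direct application of the $\#$-rule to two length-one paths yields
\[
[e_1]\#[e_2] \;=\; [e_1 e_2] + \delta_{e_2,\widetilde{e_1}}\tfrac{\mu(r(e_1))}{\mu(s(e_1))}[s(e_1)],
\]
whose degree-zero component is exactly $\tfrac{\mu(w)}{\mu(v)}[v]$ when $e_2=\widetilde{e_1}$ (and zero otherwise), as claimed.

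The main obstacle is the ``only $\kappa_2$ survives'' step in the third paragraph. If one prefers to avoid invoking Speicher, a self-contained route expands $\phi_F([e_1]\#b_1\#[e_2]\#\cdots\#b_{n-1}\#[e_n])$ iteratively via the $\#$-rule: the surviving degree-zero contributions are naturally indexed by non-crossing pair partitions whose blocks $\{i<j\}$ satisfy $e_j=\widetilde{e_i}$, each contributing a single factor of $\tfrac{\mu(r(e_i))}{\mu(s(e_i))}[s(e_i)]$ threaded compatibly by the inserted vertex idempotents. Matching this moment sum against the operator-valued moment–cumulant formula then forces all $\kappa_n$ with $n\neq 2$ to vanish and pins down $\kappa_2$ to the stated value.
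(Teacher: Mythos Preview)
Your proof is correct and follows a genuinely different route from the paper. The paper works entirely in $Gr(\Gamma)$: it \emph{posits} the claimed cumulants, builds the multiplicative extensions $\kappa_\pi$ for $\pi\in NC(n)$, shows inductively that $\kappa_\pi([e_1],\dots,[e_n])$ vanishes unless $\pi\in NC_2(n)$ with $e_j=\widetilde{e_i}$ for each block $\{i<j\}$ (obtaining an explicit product formula for $\kappa_\pi$ along the way), and then verifies the moment--cumulant identity by matching $\sum_{\pi}\kappa_\pi$ against the known formula for the Voiculescu trace from~[KS1]. You instead transfer to $F(\Gamma)$, recognise $\lambda([e])=c(e)+c(\widetilde e)^*$ as creation-plus-annihilation on the $P_0$-valued full Fock space over $P_1(\Gamma)$, and invoke Speicher's general result that such operators have vanishing $B$-cumulants of every order except~$2$. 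Your approach is more conceptual and places the statement in a standard framework; the paper's direct verification is self-contained (no black-box theorem) and yields en route the explicit expression for $\kappa_\pi$ that is reused in the Narayana-number computation of~\S4. Your closing fallback paragraph is, in fact, essentially the paper's own argument.

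One small correction: the parenthetical ``both sides multiply length-one paths in the same way'' is not accurate, since $[e_1]\#[e_2]$ and $[e_1]\circ[e_2]$ differ exactly by the $P_0$-term $\delta_{e_2,\widetilde{e_1}}\tfrac{\mu(r(e_1))}{\mu(s(e_1))}[s(e_1)]$. What you actually need (and what~[KS] provides) is only that the isomorphism is the identity on $P_0$ and on each single edge $[e]$; agreement of the two products on $P_1\times P_1$ is neither true nor required for your argument.
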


\begin{proof}
The proof depends on the `moment-cumulant' relations which guarantee
that in order to prove this proposition, it will suffice to establish
the following, which is what we shall do:

(a) Define $\kappa_n: (Gr(\Gamma))^n \rar P_0(\Gamma)$ to be the unique
multilinear map which is defined when the arguments
are tuples of paths as asserted in the proposition; note that (i)  it
is `balanced' over $P_0(\Gamma)$ in  
the sense that $\kappa_n(x_1,\cdots, x_{i-1}b, x_i, \cdots , x_n) =
\kappa_n(x_1,\cdots, x_{i-1}, bx_i, \cdots , x_n)$ for all $x_j \in Gr
(\Gamma), b \in P_0(\Gamma)$ and $1 < i \leq n$, and (ii) is
$P_0(\Gamma)$-bilinear meaning $\kappa_n(bx_1, x_2, \cdots, x_{n-1},
x_nb^\prime) = b \kappa_n(x_1, x_2, \cdots, x_{n-1},
x_n) b^\prime$ for all $x_j \in Gr(\Gamma), b,b^\prime \in P_0(\Gamma)$;

(b) define the `multiplicative extensions' $\kappa_\pi : (Gr(\Gamma))^n
\rar P_0(\Gamma)$ for $\pi \in NC(n)$ by requiring, inductively, that
if $[k,l]$ is an interval constituting a class of $\pi$, and if we
write $\sigma$ for the element of $NC(n-l+k-1)$ given by the
restriction of $\pi$ to $\{1,\cdots,k-1,l+1,\cdots,n\}$, so that `$\pi = \sigma
\bigvee 1_{[k,l]}$' then 
\beast \kappa_\pi(x_1, \cdots, x_n) &=&
\kappa_\sigma(x_1,\cdots,x_{k-1}\kappa_{l-k+1}(x_k, \cdots, x_l),
x_{l+1}, \cdots, x_n)\\
&=& \kappa_\sigma(x_1,\cdots,x_{k-1}, \kappa_{l-k+1}(x_k, \cdots,
x_l)x_{l+1}, \cdots, x_n);\eeast

(c) and verify that for any $e_1,\cdots,e_n \in \CP_1(\Gamma)$,
\be \label{Etpt}
\phi([e_1]\cdots [e_n]) = \sum_{\pi \in
  NC(n)}\kappa_\pi([e_1],[e_2],\cdots,[e_n]).
\ee

For this verification, we first assert that 
if $e_1,e_2, \cdots ,e_n \in E$ and $\pi \in NC(n)$, the quantity
$\kappa_\pi([e_1],[e_2],\cdots,[e_n])$ (yielded by the unique `multiplicative
extension' of the $\kappa_n$'s as in (b) above) can be non-zero only
if

(i) $e_1e_2\cdots e_n$ is a meaningfully defined loop based at
$s(e_1)$, meaning 
$f(e_i)=s(e_{i+1})$ for $1 \leq i \leq n$, with $e_{n+1}$ being
interpreted as $e_1$; 

(ii) $\pi \in NC_2(n)$ is a pair partition of $n$ (and in particular
$n$ is even), such that $\{i,j\} \in \pi \Rar e_j = \widetilde{e_i};$

\bigskip
and if that is the case, then,

\be \label{kn}
\kappa_\pi([e_1],[e_2],\cdots,[e_n])= \left( \prod_{\stackrel{\{i,j\} \in
      \pi}{i<j}} \frac{\mu(r(e_i)}{\mu(r(e_j)} \right) [s(e_1)]~. \ee

We prove this assertion by induction on $n$. This is trivial for $n=1$
since $\kappa_1 \equiv 0$. By the inductive definition of the
multiplicative extension, it is clear that if
$\kappa_\pi([e_1],[e_2],\cdots,[e_n])$ is to be non-zero, $\pi$ must
contain an interval class of the form $\{k,k+1\}$ such that $e_{k+1} =
\widetilde{e_k}$; if $\sigma$ denotes $\pi|_{\{1,2, \cdots, k-1, k+2,
  \cdots n\}}$ we must have
\beast \kappa_\pi([e_1],\cdots,[e_n]) &=& \frac{\mu(r(e_k))}{\mu(r(e_{k+1}))}
\kappa_\sigma([e_1],\cdots, [e_{k-1}] [s(e_{k})], [e_{k+2}], \cdots, [e_n])\\
&=& \frac{\mu(r(e_k))}{\mu(r(e_{k+1}))}
\kappa_\sigma([e_1],\cdots, [e_{k-1}], [s(e_{k})][e_{k+2}], \cdots, [e_n])\\
&=& \frac{\mu(r(e_k))}{\mu(r(e_{k+1}))}
\kappa_\sigma([e_1],\cdots, [e_{k-1}] [r(e_{k+1})], [e_{k+2}], \cdots, [e_n])~;\\
\eeast
and for this to be non-zero, we must have  $r(e_{k-1})=s(e_{k}) =
r(e_{k+1}) =s(e_{k+2})$, in which case we would have
\[\kappa_\pi([e_1],\cdots,[e_n]) = \frac{\mu(r(e_k))}{\mu(r(e_{k+1}))}
\kappa_\sigma([e_1],\cdots, [e_{k-1}], [e_{k+2}], \cdots, [e_n])~,\]
and the requirement that $\kappa_\sigma([e_1],\cdots, [e_{k-1}],
[e_{k+2}], \cdots, [e_n])$ be non-zero, along with the induction
hypothesis, finally completes the proof of the assertion.

Now, in order to verify equation \ref{Etpt}, it suffices to check that for
any $v \in V$, we have
\be \label{tptk} tr([e_1][e_2]\cdots [e_n][v])= \sum_{\pi \in
  NC(n)}tr(\kappa_\pi([e_1],[e_2],\cdots,[e_n])[v]).\ee

First observe that both sides of equation \ref{tptk} vanish unless
$e_1\cdots e_n$ is a meaningfully defined path with both source and
range equal to $v$ (since $tr$ is a trace and $[v]$ is idempotent).
In view of our description above of the multiplicative extension
$\kappa_\pi$, we need, thus, to verify that for such a loop, we have
\[ tr([e_1\cdots e_n]) = \sum_{\pi \in NC_2(n)} \left( \prod_{\stackrel{\{i,j\} \in
      \pi}{i<j}} \delta_{e_j, \widetilde{e_i}} \frac{\mu(r(e_i)}{\mu(r(e_j)} \right) \mu^2(s(e_1)),\]
but that is indeed the case (see equation (3) and the
proof of Proposition 5 in [KS1] ).
\end{proof}

In order to derive the true import of Proposition \ref{Bvalcum}, we
should first introduce some notation: 

For each dual pair $e, \tilde{e}$ of edges - with, say, $s(e)=v,
r(e)=w$ - we shall write $\Gamma_e = (V_e, E_e, \mu_e)$ where
$V_e=V, \mu_e=\mu$ and $E_e = \{e, \tilde{e}\}$ (with source, range
and reversal in $E_e$ as in $E$). If $e = \tilde{e}$, the above
definitions are to be suitably interpreted. Now for `the true import
of Proposition \ref{Bvalcum}':

\begin{Corollary}\label{amfrcor}
With the foregoing notation, we have:
\[Gr(\Gamma,\mu) = \ast_{P_0(\Gamma)} \{Gr(\Gamma_e,\mu_e): \{e,
\tilde{e}\} \subset E\} \]
and hence, also
\[M(\Gamma,\mu) = \ast_{P_0(\Gamma)} \{M(\Gamma_e,\mu_e): \{e,
\tilde{e}\} \subset E\}~. \]
\end{Corollary}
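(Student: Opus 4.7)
The plan is to apply the cumulant criterion for amalgamated freeness: subalgebras $A_1,\ldots,A_k$ of a $P_0(\Gamma)$-valued non-commutative probability space $(A,\phi)$, each containing $P_0(\Gamma)$, are free with amalgamation over $P_0(\Gamma)$ if and only if every mixed cumulant $\kappa_n(a_1,\ldots,a_n)$ with $a_j \in A_{i(j)}$ and the $i(j)$ not all equal vanishes. Here I take $A_{\{e,\tilde e\}} = Gr(\Gamma_e,\mu_e)$, one per dual pair $\{e,\tilde e\} \subset E$; each contains $P_0(\Gamma_e) = P_0(\Gamma)$, and together they generate $Gr(\Gamma,\mu)$ as a $*$-algebra since any basis element $[\xi_1\cdots\xi_n]$ factors in the $\circ$-product as $[\xi_1] \circ [\xi_2] \circ \cdots \circ [\xi_n]$ with $[\xi_i] \in A_{\{\xi_i,\widetilde{\xi_i}\}}$.

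To verify the vanishing of mixed cumulants, $P_0(\Gamma)$-multilinearity reduces the task to the case where each $a_j = [\xi^{(j)}]$ is a basis path in $\Gamma_{e(j)}$ with the dual pairs $\{e(j),\widetilde{e(j)}\}$ not all equal. Writing $[\xi^{(j)}] = [\xi^{(j)}_1] \circ \cdots \circ [\xi^{(j)}_{k_j}]$ as a $\circ$-product of single-edge elements and invoking the product formula for free cumulants (Theorem 11.12 of [NS]), one expresses $\kappa_n(a_1,\ldots,a_n)$ as a sum of $\kappa_\pi$-evaluations on the flattened $N$-edge tuple, taken over non-crossing partitions $\pi$ satisfying $\pi \vee \sigma = 1_N$, where $\sigma$ is the interval partition induced by the grouping into the $a_j$'s. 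By Proposition \ref{Bvalcum}, each $\pi$ with a non-zero contribution is forced to be a non-crossing pair partition whose every pair $\{i<j\}$ satisfies $f_j = \widetilde{f_i}$. Since the original tuple is mixed and $\pi \vee \sigma = 1_N$, some pair of $\pi$ must cross between groups coming from two distinct dual pairs, which would force $f_j = \widetilde{f_i}$ for edges $f_i, f_j$ lying in distinct dual pairs — absurd. Hence every surviving $\pi$ contributes zero, mixed cumulants vanish, and $Gr(\Gamma,\mu) = *_{P_0(\Gamma)} \{Gr(\Gamma_e,\mu_e)\}$.

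The statement for $M(\Gamma,\mu)$ follows by passage to weak closures. The trace $tr$ and the conditional expectation $\phi$ extend to the standard-form von Neumann algebra $M(\Gamma,\mu)$, where $\phi$ becomes a normal $tr$-preserving conditional expectation onto $P_0(\Gamma)$; $M(\Gamma_e,\mu_e)$ is the weak closure of $\lambda(Gr(\Gamma_e,\mu_e))$. Because amalgamated freeness is a condition on mixed $\phi$-moments, and mixed moments are determined by the cumulants just computed, the property is preserved on passage from the $*$-algebras to their weak closures.

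The principal obstacle I anticipate is the combinatorial step: correctly identifying which partitions $\pi$ are compatible with the grouping, and showing that the pair-partition constraint from Proposition \ref{Bvalcum} together with the mixedness hypothesis excludes every such surviving $\pi$. Once one observes that every surviving $\pi$ must be a global pair partition — so that no non-pair blocks can be hiding in the sum — the requirement $\pi \vee \sigma = 1_N$ directly delivers a cross-pair between distinct dual pairs, and the conclusion is immediate.
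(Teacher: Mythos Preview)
Your argument is correct, and it reaches the same conclusion via the same two ingredients (the cumulant criterion for amalgamated freeness together with Proposition~\ref{Bvalcum}), but the route is more circuitous than the paper's. The paper invokes Proposition~3.3.3 of [S1], which says it suffices to check vanishing of mixed $P_0(\Gamma)$-cumulants on a set of \emph{generators} of each subalgebra; taking $G_{\{e,\tilde e\}}=\{[e],[\tilde e]\}$, Proposition~\ref{Bvalcum} then gives the result in one line, since the only nonvanishing cumulant is $\kappa_2([e],[\tilde e])$ and this already lives in a single $G_{\{e,\tilde e\}}$. You instead verify vanishing on arbitrary basis paths, expand each path as a $\circ$-product of edges, and appeal to the product formula for cumulants to reduce to the single-edge case---a valid strategy, but one that re-derives (part of) the ``generators suffice'' criterion by hand. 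One small caveat: Theorem~11.12 of [NS] is stated for scalar-valued cumulants; the operator-valued product formula you need is in Speicher's memoir [S1] (the nested-evaluation structure of $\kappa_\pi$ replaces the literal product), so the citation should be adjusted, though the argument goes through unchanged.
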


\begin{proof}
Proposition 3.3.3 of [S1] shows that 
if $A \stackrel{\phi}{\rar} B$ is a `non-commutative
  probability space  over $B$', if $\{A_i : i \in I\}$ is a family 
of subalgebras of $A$ containing $B$, such that $\{A_i : i \in I\}$
generates $A$, and if $G_i$ is a set of generators of the algebra $A_i$,
  then $A$ is
  the free product with amalgamation over $B$ of $\{A_i : i \in I\}$
  if and only if the mixed
  $B$-valued cumulants $\kappa_n(x_1, \cdots , x_n)$ vanish whenever
  $x_1, \cdots , \cdots x_n \in \cup_i 
  G_i$,   unless all the $x_i$
  belong to the same $G_k$ for some $k$. The desired assertion then
  follows from Proposition \ref{Bvalcum}.
\end{proof}

The following assertion, advertised in the abstract, is an immediate
consequence of Corollary \ref{amfrcor} and Examples \ref{mgamegs} (1)
and (2).

\begin{Corollary}
If $\Gamma_n$ denotes the `flower with $n$ petals' (thus $|V| = 1, |E|
= n$), then $M(\Gamma) \cong L\F_n$, independent of the reversal map
on $E$.
\end{Corollary}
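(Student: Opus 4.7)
The plan is to invoke Corollary \ref{amfrcor}, which expresses $M(\Gamma_n,\mu)$ as a free product with amalgamation over $P_0(\Gamma_n)$ of the algebras $M(\Gamma_e,\mu_e)$, one for each dual pair $\{e,\widetilde{e}\} \subset E$. Since $\Gamma_n$ has a single vertex $v$, the normalization $\sum_{u \in V}\mu^2(u)=1$ forces $\mu(v)=1$, and $P_0(\Gamma_n) = \C[v] \cong \C$. The amalgamation is therefore over $\C$, i.e., it reduces to the ordinary scalar free product.

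Next I would partition $E$ according to the reversal involution: let $k$ be the number of self-dual edges (those with $e=\widetilde{e}$) and $m$ the number of two-element orbits $\{e,\widetilde{e}\}$, so that $n = k + 2m$. A self-dual edge yields a one-vertex, one-edge subgraph $\Gamma_e$, which is precisely the setting of Example \ref{mgamegs}(1); this gives $M(\Gamma_e,\mu_e) \cong L\Z \cong L\F_1$. A two-element orbit yields a one-vertex, two-edge subgraph $\Gamma_e$, which is precisely the setting of Example \ref{mgamegs}(2); this gives $M(\Gamma_e,\mu_e) \cong L\F_2$.

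Assembling the pieces using Corollary \ref{amfrcor}, together with the standard identity $L(G_1) * L(G_2) \cong L(G_1 * G_2)$ for scalar-amalgamated free products of group von Neumann algebras and the group-level identity $\F_a * \F_b \cong \F_{a+b}$, one obtains
\[
M(\Gamma_n,\mu) \;\cong\; \underbrace{L\F_1 * \cdots * L\F_1}_{k \text{ copies}} \,*\, \underbrace{L\F_2 * \cdots * L\F_2}_{m \text{ copies}} \;\cong\; L\F_{k+2m} \;\cong\; L\F_n.
\]
Since the right-hand side depends only on the quantity $n = k+2m$ and not on how the edges are paired by $\sim$, the isomorphism is independent of the reversal map on $E$, as advertised. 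There is no serious obstacle in this proof: the corollary is essentially an assembly of the two building-block examples with the amalgamated free product decomposition, the only mild observation being that the trivial amalgamation base $P_0(\Gamma_n) = \C$ in the one-vertex case lets us pass cleanly from amalgamated to scalar free products, where the $L\F_a * L\F_b \cong L\F_{a+b}$ identity applies.
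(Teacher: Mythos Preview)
Your proposal is correct and follows exactly the approach the paper itself takes: the paper simply states that the corollary is an immediate consequence of Corollary~\ref{amfrcor} together with Examples~\ref{mgamegs}(1) and (2), and you have spelled out precisely those details. The only addition you make is the explicit bookkeeping with $k$ self-dual edges and $m$ dual pairs, and the use of $L\F_a * L\F_b \cong L\F_{a+b}$, which are exactly the implicit steps the paper leaves to the reader.
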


\begin{Remark}
We may deduce from Proposition \ref{Bvalcum} that the $x = \lambda(e)$ of Example \ref{mgamegs} (3) is a $P_0(\Gamma)$-valued
circular operator, in the sense of [Dyk] (see Definition 4.1), with
covariance $(\alpha,\beta)$ where $\alpha(b)
= \phi(x^*bx)$ and $\beta(b) = \phi(xbx^*)$ for all $b \in \CP_0$ are
the completely positive self-maps of $P_0(\Gamma) (= \C p_v \oplus \C
p_w)$ induced by the matrices 
\[\alpha = \left[ \ba{cc} 0 & \rho^{-1}\\ 0 & 0 \ea \right] \mbox{ and }
 \beta = \left[ \ba{cc} 0 & 0\\ \rho & 0 \ea \right]  ~.\]

If $s = x+x^*$, it follows then that $s$ is a $P_0(\Gamma)$-valued
semi-circular element (since $\kappa_n(sb_1,sb_2,\cdots sb_{n-1},s) = 0$
unless $n=2$ and $\kappa_2(sb,s) = \eta(b)$  where $\eta$ is the 
(completely) positive self-map of  $\C \oplus \C$  induced by the matrix
\[\eta = \left[ \ba{cc} 0 & \rho^{-1}\\ \rho & 0 \ea \right] ~.\]
\end{Remark}

\section{Narayana numbers}

Recall the Narayana numbers $N(n,k)$ defined for all  $n,k \in {\mathbb N}$ with
$1 \leq k \leq n$ by
$$
N(n,k) = |\{ \pi \in NC(n) : |\pi| = k \}|.
$$
Define the associated polynomials $N_n$ by
$$
N_n(T) = \sum_{k=1}^n N(n,k) T^k.
$$

Recall also that a random variable in a non-commutative probability space
$(A,\tau)$ is said to be free Poisson with rate $\lambda$ and jump size
$\alpha$ if its free cumulants are given by $\kappa_n = \lambda \alpha^n$
for all $n \in {\mathbb N}$. An easy application of the moment-cumulant
relations shows that an equivalent condition for a random variable to
be free Poisson with rate $\lambda$ and jump size
$\alpha$ is that its moments are given by $\mu_n = \alpha^n N_n(\lambda)$
for all $n \in {\mathbb N}$. 

We now illustrate an application of this characterisation of a free Poisson
variable in the situation of \S 2, Example 2.1 (3).
There, $x = \lambda(e)$ has a matrix decomposition involving 
$t \in {\cal L}({\cal H}_w,
{\cal H}_v)$ where $t^*t$ was shown to have a free Poisson distribution.
We will verify below by a cumulant computation that $t^*t$ is free Poisson with
rate $\rho^2$ and jump size $\rho^{-1}$ in the non-commutative probability
space $p_wM(\Gamma,\mu)p_w$.

Begin by observing that $x^*x$ has a non-zero entry only in the $w$-corner
and that this entry is $t^*t$. Thus the trace in
$M(\Gamma,\mu)$ of $(x^*x)^n$ 
is $\mu^2(w)$ times the trace - call it $tr_w$ - in
$p_wM(\Gamma,\mu)p_w$ of $(t^*t)^n$. We now
compute $tr((x^*x)^n) = tr(([e]^*[e])^n)$.

First apply the moment-cumulant relations and Proposition 3.1 to conclude
that
$$
\phi(([e]^*[e])^n) = \sum_{\pi \in NC(2n)} \kappa_\pi([e]^*,[e],\cdots,[e]^*,[e]).
$$
While this sum ranges over all $\pi  \in NC(2n)$, Proposition 3.1 enables
us to conclude that unless $\pi$ is a non-crossing pair partition, its
contribution vanishes.
Thus we have:
\[\phi(([e]^*[e])^n) = \sum_{\pi \in NC_2(2n)}
\kappa_\pi([e]^*,[e],\cdots,[e]^*,[e]).\] 

Now we use the well-known bijection between non-crossing pair partitions
(or equivalently, Temperley-Lieb diagrams) on $2n$ points and all
non-crossing partitions on $n$ points. 
We will denote this bijection
as $\pi \in NC_2(2n) \leftrightarrow \tilde{\pi} \in NC(n)$.
This is illustrated by example in Figure \ref{bij} for $\pi =
\{\{1,8\},\{2,5\},\{3,4\},\{6,7\},\{9,12\},\{10,11\}\}$ and may be 
summarised by saying that the black regions of the Temperley-Lieb diagram for $\pi \in NC_2(2n)$
correspond to the classes of $\tilde{\pi} \in NC(n)$.
\begin{figure}[htpb]
\begin{center}
\includegraphics[height=2cm]{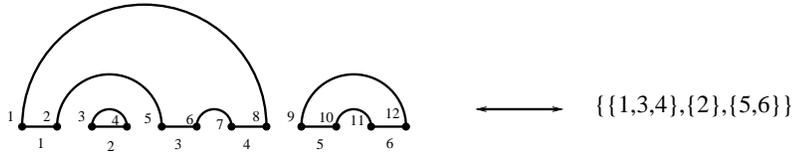}
\caption{$\pi \in NC_2(12) \leftrightarrow \tilde{\pi} \in NC(6)$}
\end{center}
\label{bij}
\end{figure}
Note that in Figure \ref{bij} the numbers above refer to the vertices while those below
refer to the black segments.

It follows from Proposition 3.1 that for
any $\pi \in NC_2(2n)$, the term $\kappa_\pi([e]^*,[e],\cdots,[e]^*,[e])$ is a 
scalar multiple of $p_w$ where the scalar is given by a
product
of $n$ terms each of which is $\rho = \frac{\mu(v)}{\mu(w)}$ or 
$\rho^{-1} = \frac{\mu(w)}{\mu(v)}$. Classes of $\pi$ for which the smaller
element is odd give $\rho$, while those for
which the smaller element is even give $\rho^{-1}$.
Thus $\kappa_\pi([e]^*,[e],\cdots,[e]^*,[e])$ evaluates to 
$\rho^{(|\pi|_{odd} - |\pi|_{even})} p_w = \rho^{(2|\pi|_{odd} - n)} p_w$, where, of course, $|\pi|_{odd}$ (resp. $|\pi|_{even}$) denotes
the number of classes of $\pi$ whose smaller element is odd (resp. even).

Our main combinatorial observation is contained in the following simple
lemma.

\begin{lemma} 
For any $\pi \in NC_2(2n)$, $|\pi|_{odd} = |\tilde{\pi}|$.
\end{lemma}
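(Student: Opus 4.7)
The plan is to induct on $n$. For $n=1$ the unique partition $\pi=\{\{1,2\}\}$ has $|\pi|_{odd}=1$ and corresponds to $\tilde\pi=\{\{1\}\}$, so both sides equal $1$.

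For the inductive step I would select an innermost pair $\{k,k+1\}\in\pi$ (which must exist because $\pi$ is non-crossing), delete it, and relabel the remaining $2n-2$ points by the shift sending any old label $j\geq k+2$ to $j-2$. Because this relabeling preserves parities, the smaller endpoint of every surviving pair keeps its parity; hence $|\pi'|_{odd}=|\pi|_{odd}$ when $k$ is even and $|\pi'|_{odd}=|\pi|_{odd}-1$ when $k$ is odd. I would then establish the matching statement for $|\tilde\pi|$ by examining the Temperley--Lieb picture. When $k$ is odd, $[k,k+1]$ is a black segment and the region enclosed by the arc $\{k,k+1\}$ is a singleton black region, so its removal decreases $|\tilde\pi|$ by exactly one. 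When $k$ is even, $[k,k+1]$ is a white segment and the region inside the arc is white; removing the arc fuses this white region into the region directly outside, and the two flanking black segments $[k-1,k]$ and $[k+1,k+2]$ (which already lay in that outer region) merge into a single new black segment after relabeling, leaving the block count of $\tilde\pi$ unchanged.

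The main obstacle I foresee is justifying the coloring assertions cleanly---specifically, that in the $k$-even case the two flanking black segments truly lie in the same region, and that removing an innermost white arc does not affect the count of black regions. The decisive auxiliary fact is that the depth of any segment $[i,i+1]$ (the number of arcs enclosing it) has the same parity as $i$. This can be seen by counting the arcs crossed by a vertical ray rising from the midpoint of $[i,i+1]$: such a ray crosses an arc $\{a,b\}$ precisely when $a\leq i<b$, and the points of $\{1,\ldots,i\}$ paired within $\{1,\ldots,i\}$ contribute $2$ apiece to the total, so the parity matches $i$. This identifies the two-coloring intrinsically with parities, places every black segment in a black region, and reduces the inductive step to a routine local check at the removed arc.
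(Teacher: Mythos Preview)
Your proof is correct and follows essentially the same approach as the paper: induction on $n$ by removing an innermost pair $\{k,k+1\}$ and splitting into cases according to the parity of $k$. The paper handles the $|\tilde\pi|$ side with the phrase ``a moment's thought,'' whereas you supply an explicit justification via the depth-parity observation; this is a welcome elaboration but not a different argument.
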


\begin{proof}
We induce on $n$ with the basis case $n=1$ having only one $\pi$
with $|\pi|_{odd} = |\tilde{\pi}| = 1.$ For larger $n$, consider
a class of $\pi$ of the form $\{i,i+1\}$, and remove it to get
$\rho \in NC_2(2n-2)$. A moment's thought shows that if $i$ is odd
then $|\pi|_{odd} = |\rho|_{odd} + 1 = |\tilde{\rho}| + 1 = |\tilde{\pi}|$,
while if $i$ is even then $|\pi|_{odd} = |\rho|_{odd} = |\tilde{\rho}| = |\tilde{\pi}|.$
\end{proof}

Thus:
\begin{eqnarray*}
\phi(([e]^*[e])^n) &=& \sum_{\pi \in NC_2(2n)} \rho^{(2|\pi|_{odd} -
  n)} p_w\\
&=& \sum_{\tilde{\pi} \in NC(n)} \rho^{(2|\tilde{\pi}| - n)} p_w\\
&=& \sum_{k=1}^{n} \sum_{\{\tilde{\pi} \in NC(n) : |\tilde{\pi}| = k\} } \rho^{2k-n} p_w\\
&=& \sum_{k=1}^{n} N(n,k)\rho^{2k-n} p_w
\end{eqnarray*}

Hence $tr(([e]^*[e])^n) = \sum_{k=1}^{n} N(n,k)\rho^{2k-n} \mu^2(w)$ and thus
$tr_w((t^*t)^n) = \sum_{k=1}^{n} N(n,k)\rho^{2k-n}$. Now the characterisation of
free Poisson elements in terms of their moments shows that $t^*t$ is free
Poisson with rate $\rho^2$ and jump size $\rho^{-1}$.

\begin{Remark}\label{frpois}
{\rm 
\ben
\item
Thus, for $t = \rho^\half \ell + \rho^{-\half} \ell^*$, we have shown
that $t^*t$ is a free Poisson element with rate $\rho^2$ and jump size
$\rho^{-1}$. By scaling by an appropriate constant, we can 
similarly obtain such simple Fock-type models of free Poisson elements
with arbitrary jump size and rate.
\item
Similar scaling, and the fact that $e^{i \theta} \ell$ is unitarily
  equivalent to $\ell$ (by a unitary operator which fixes $\delta_0$)
show that, in fact, if $t = a\ell + b\ell^*$ for any $a,b \in \C$, then
$t^*t$ is a free Poisson element.
\een }
\end{Remark}

{\bf Acknowledgement:} We would like to thank M. Krishna for patiently
leading us through the computation of Cauchy transforms of rank-one
perturbations as we struggled with an apparent contradiction, which
was finally resolved when we realised a problematic minus sign
stemming from a small mistake in choice of square roots. (We claim no
originality for this problem, for the same incorrect sign also
surfaces on page 33 of [NS] - cf. our formula (\ref{f0}) and the
formula there for $g$ , when $-\infty < t < -2$.)

\end{document}